\newtheorem{proposition}{Proposition}
\newtheorem{definition}{Definition}
\newtheorem{assumption}{Assumption}
\newtheorem{remark}{Remark}
\newenvironment{proof}{\noindent {\bf Proof:}}{\hfill $\Box$}
\newcommand{\R}{\mathbb{R}} 
\newcommand{\N}{\mathbb{N}} 
\newcommand{\an}[1]{{\color{black}  #1}}
\newcommand\Set[2]{\left\{\,#1\mbox{ }\middle|\mbox{ }#2\,\right\}}
\newcommand{\defeq}{:=}
\title{\bf Solving unbounded calculus of variations problems with the moment-SOS hierarchy\footnote{This work was funded by the European Union under the project ROBOPROX
(reg. no. CZ.02.01.01/00/22 008/0004590). \an{It was also supported by the Grant Agency of the Czech Technical University in Prague (grant no.
SGS25/145/OHK3/3T/13) and National Research Foundation, Prime Minister’s Office, Singapore, under its Campus for Research Excellence and Technological Enterprise (CREATE) program.}}}
\begin{document}

\author{Karol\'{\i}na Sehnalov\'a$^1$, Didier Henrion$^{1,2}$, Milan Korda$^{1,2}$, Martin Kru\v z\'{\i}k$^{3,4}$}

\footnotetext[1]{Faculty of Electrical Engineering, Czech Technical University in Prague, Technick\'a 2, CZ-16627 Prague, Czechia.}
\footnotetext[2]{CNRS; LAAS; Universit\'e de Toulouse, 7 avenue du colonel Roche, F-31400 Toulouse, France. }
\footnotetext[3]{Institute of Information Sciences and Automation, Czech Academy of Sciences, Pod Vod\'arenskou v\v e\v z\'{\i}~4, CZ-18200 Prague, Czechia.}
\footnotetext[4]{Faculty of Civil Engineering, Czech Technical University in Prague, Th\'{a}kurova 7, CZ-16627 Prague, Czechia.}

\date{Draft of \today}

\maketitle
\begin{abstract}
The behaviour of the moment-sums-of-squares (moment-SOS) hierarchy for polynomial optimal control problems on compact sets has been explored to a large extent. Our contribution focuses on the case of non-compact control sets. We describe a new approach to optimal control problems with unbounded controls, using compactification by partial homogenization, leading to an equivalent infinite dimensional linear program with compactly supported measures. Our results are closely related to the results of a previous approach using DiPerna-Majda measures. However, our work provides a sound proof of the absence of relaxation gap, which was conjectured in the previous work, and thereby enables the design of a moment-sum-of-squares relaxation with guaranteed convergence.\\
\\
\textbf{Keywords:} 
Optimal control, polynomial optimization, moment-sums-of-squares hierarchy, occupation measures, homogenization.
\end{abstract}

\section{Introduction}
\label{sec:introduction}
The moment-sums-of-squares (moment-SOS) hierarchy is a powerful tool that can be used to solve various non-linear non-convex problems to global optimality, at the price of solving a collection of convex optimization problems of increasing size, see e.g. \cite{lasserre_positive_2010,korda_moment_2020,nie_moment_2023} and references therein. However, the proof of convergence of the hierarchy relies on the assumption that the variables are restricted to a compact set. In particular, this is the case for the optimal control problems with polynomial data solved with the moment-SOS hierarchy in \cite{lasserre_nonlinear_2008} and \cite{korda_moment_2020}. In these references, the control  (as well as the time and state) variables are restricted to a given compact set.

In the context of polynomial optimization problems (POPs) without differential equations and control, compactness assumptions can be sometimes relaxed. For example, analyticity or growth conditions can be enforced on the moment sequences (using the so-called Carleman and Nussbaum conditions, see e.g. \cite[Prop. 3.5 and Section 3.4.2]{lasserre_positive_2010}) for solving unbounded POPs. Homogenization can be used to reformulate unbounded POP as bounded POP with the use of projective coordinates, as explained e.g. in \cite{huang_homogenization_2022} or \cite[Section 8.5]{nie_moment_2023}.

Our work is dedicated to optimal control problems for which the control variable can be unbounded, i.e. it can attain infinite values. In the context of the moment-SOS hierarchy, such problems were analyzed in \cite{claeys_semi-definite_2014, henrion_optimal_2019} with the use of DiPerna-Majda measures, a tool from functional analysis and partial differential equations. Other approaches suggest, for example, increasing bounds on the control and then using the moment-SOS hierarchy, as described in \cite{fantuzzi_global_2024}.
There exist versions of the maximum principle for unbounded controls, see e.g. \cite{clarke_control}, but the interplay between these first order local optimality conditions and the globally optimal bounds provided by the moment-SOS hierarchy remains to be explored.

We propose a series of linear programming (LP) reformulations of the unbounded optimal control problem, arriving at a reformulation that is suitable for the numerical solution via the moment-SOS hierarchy. We believe our approach is more direct than previous work, as it bypasses DiPerna-Majda measures, and it relies on a simpler partial homogenization technique similar to what was originally developed for POP. Since we are dealing with differential equations, our reformulations revolve around occupation measures, which are used to describe trajectories. These occupation measures are supported on non-compact sets, which are then compactified using homogenization. This gives rise to a reformulation that contains rational terms. The last reformulation is designed to restore polynomial data, so that it can be solved via the moment-SOS hierarchy. More precisely, a primal formulation on (pseudo-) moments of positive measures is proposed and examined, rather than a dual on (SOS) positive polynomials. However, after reformulations, our work and \cite{claeys_semi-definite_2014,henrion_optimal_2019} propose the same LP on measures eligible for the moment-SOS hierarchy, even though the approaches are different. Hence, our work can be seen as a simplification of \cite{claeys_semi-definite_2014,henrion_optimal_2019}. Furthermore, we rigorously prove that all these LP reformulations achieve the same optimal value as the optimal control problem with relaxed controls, i.e. without relaxation gap, which has not been done in the previous work.

\subsection*{Contribution}

In summary, for solving unbounded optimal control problems with the moment-SOS hierarchy, the main contributions of this work are twofold:
\begin{itemize}
    \item simplification of previous approaches;
    \item rigorous proof of no relaxation gap.
\end{itemize}

\subsection*{Outline}

Our optimal control problem with unbounded control and regularity assumptions is stated in Section \ref{sec:statement}. We then introduce 3 LP occupation measure formulations of the optimal control problem: in Section \ref{sec:lpo} an LP with polynomial data and unbounded controls, in Section \ref{sec:lpc} an LP with rational data and bounded controls, and in Section \ref{sec:lpp} an LP with polynomial data and bounded controls. This latter LP can be then approximately solved with the moment-SOS hierarchy. We prove that these 3 LPs have the same value as the original problem, i.e. there is no relaxation gap. Our approach is then illustrated on 2 numerical examples in Section \ref{sec:examples}.

\subsection*{Notations}

By measures we understand real valued countably additive functions on Borel sets of Euclidean space \cite{kolmogorov_introductory_1975}. The support of a measure $\mu$ is denoted $\mathrm{spt}\mu$. Given a set $\mathscr{A}$, let $\mathcal{P}(\mathscr A)$ denote the set of probability measures supported on $\mathscr A$ and let $\mathcal{M}_+(\mathscr A)$ denote the cone of positive (i.e. non-negative) measures supported on $\mathscr A$. The Lebesgue measure is denoted $\lambda$. It is such that $\int f(t)d\lambda(t) = \int f(t)dt$ is the average value of $f$. The Dirac measure at $s$ is denoted $\delta_s$. It is such that $\int f(t)\delta_s(dt) = f(s)$ is the pointwise value of $f$. The space of continuously differentiable functions $f: \mathscr A \to \R$ is denoted $C^1(\mathscr A)$. The time derivative of function $f(t)$ is denoted $\dot{f}(t)$. Given a set $\mathscr B$, let $W^{1,p}(\mathscr A; \mathscr B)$ denote the Sobolev space of functions $f: \mathscr A \to \mathscr B$ whose weak derivative belongs to $L^p({\mathscr A})$, the Lebesgue space with exponent $p$ \cite{Dacorogna_2008}.

\section{Problem statement}\label{sec:statement}

We consider the following class of optimal control problems:
\begin{align}
\label{problem_gradient_diff_constr}
    p^* \defeq &\inf_{x,u} \int_a^b{l\left(t, x(t), u(t)\right)}dt,\\ \notag
    \text{s.t.: } & x(a) = x_a, \quad x(b) = x_b,\notag \\ & x(t) \in \mathscr X, \quad \dot{x}(t) = u(t) \quad \text{a.e. } t \in [a, b], \notag\\ 
    &u\in L^r\left( [a, b]\right) \notag  
\end{align} 
where $l$ is a polynomial, $\mathscr X$ is a compact interval, $p\in \N$, $p \geq 1$, and $r$ is the maximum of 1 and the degree of $l$ in $u$. Note that this setup allows the control $u(t)$ to be unbounded. Since $u(t) \in L^r([a,b])$, then $x(t) \in W^{1,r}([a,b]; {\mathscr X})$.

\begin{assumption}\label{ass:coercivity}
There exists a minimizing sequence $(u_k)_{k=1}^\infty \subset L^r([a,b])$  satisfying the bound $\int_a^b |u_k(t)|^r dt < C$ for some $C < \infty$ .
\end{assumption}
\an{Note that enforcing a bound on the Lebesgue norm of the control does not imply that the control is bounded pointwise. Also, under this assumption, the objective value is finite for any element of the minimizing sequence.}
\begin{remark}
\label{remark_coercivity}
    This assumption is satisfied for example when $l$ is nonnegative and coercive with respect to the $u$ variable (i.e, for every positive constant $C_1$ there exists a positive constant $C_2$ such that $l(t,x,u)\geq C_2|u|^r$ whenever $|u| \geq C_1$). \an{Concretely, we can set $C = (b-a)C_1^r + k/C_2$, where $(C_1,C_2)$ is any pair from the definition of coercivity and $k \ge p^*$. }
\end{remark}

We consider the scalar case just for the ease of exposition, the developed technique can be extended easily to multivariate case and $\mathscr X$ would be a general compact semialgebraic set \cite[Chapter 1]{lasserre_positive_2010} instead of an interval. This class of problems is a particular case of optimal control problems called (scalar) calculus of variations problems, since we control the whole gradient, which is here the velocity.

\section{Linear formulation using occupation measures}\label{sec:lpo}

First, let us define relaxed controls (a.k.a. Young measures) and occupation measures \cite{young_lectures_1969,lewis_relaxation_1980,vinter_1993}.
\begin{definition}[Relaxed control]
Given an integer $r \geq 1$, a relaxed control is a measurable function $t \in [a,b] \mapsto \omega_t \in \mathcal P(\R)$, such that there exists a sequence $(u_k)_{k=1}^\infty \subset L^r([a,b])$ satisfying
\begin{align*}
    \lim_{k\to\infty}\int_a^bv(u_k(t))dt = \int_a^b\int_{\R}v(u)d\omega_t(u)dt
\end{align*}
for every $v\in L^1([a, b])$. We denote the set of relaxed controls by $\mathcal U_{\text{r}}([a, b]; \mathcal P(\R))$.
\end{definition}
Relaxed controls are just time-dependent probability measures. We can now relax problem (\ref{problem_gradient_diff_constr}) with classical controls as a problem with relaxed controls:
\begin{align}
\label{problem_relaxed}
    p_{\text{r}}^* \defeq &\inf_{x,\omega} \int_a^b{\int_{\R}l\left(t, x(t), u\right)\omega_t(du)}dt,\\ \notag
    \text{s.t.: } & x(a) = x_a, \quad x(b) = x_b,\notag \\ &x(t) \in \mathscr X, \quad
    \dot{x}(t) = \int_{\R}u\omega_t(du) \quad \text{a.e. } t \in [a, b],\notag \\
    & 
    \omega \in \mathcal U_{\text{r}}([a, b]; \mathcal P(\R)). \notag
\end{align}
Note that $ p_{\text{r}}^* \leq p^*$, because in general, there are more relaxed controls than those corresponding to classical controls, which correspond to the particular choice $\omega_t(du)=\delta_{u(t)}$.
We say that a trajectory $x(t)$ is admissible, provided that it satisfies the constraints of \eqref{problem_relaxed}.

\begin{definition}[Occupation measure]
We define the occupation measure associated to an admissible trajectory $x(t)$ as
\begin{equation*}
    d\mu\left(t,x,u\right) \defeq \lambda(dt) \delta_{x(t)}(dx) \omega_t(du).
\end{equation*}
The set of all occupation measures of admissible trajectories is denoted $\mathcal{O}(\Omega)$, where \an{$\Omega \defeq [a, b] \times \mathscr X \times \R$.}
\end{definition}
Note that whenever $\omega_t(du) = \delta_{u(t)}$, the occupation measure satisfies the following relation
\begin{align*}
    &\int_{\Omega}v(t, x, u)d \mu(t, x, u) = \int_a^bv(t, x(t), u(t))dt
\end{align*}
for every test function $v \in L^1(\Omega)$,
which indicates that integration with respect to an occupation measure corresponds to time averaging along its trajectory.

Now consider a test function $v \in C^1\left( [a, b] \times \mathscr X \right)$. Its time average over a trajectory $x(t)$ can be expressed as
\begin{align*}
    \int_a^bdv\left( t, x\left(t\right) \right) &= v\left(b, x_b \right) - v\left(a, x_a \right)\\ &= \int_{[a, b]\times \mathscr X}v \delta_b\delta_{x_b} - \int_{[a, b]\times \mathscr X}v \delta_a\delta_{x_a}
\end{align*}
or 
\begin{align*}
    &\int_a^bdv\left( t, x\left(t\right) \right) = \int_a^b\left(\frac{\partial v}{\partial t} + \frac{\partial v}{\partial x}u \right)\left( t, x(t), u(t)\right)dt \\ =& \int_{\Omega}\left(\frac{\partial v}{\partial t} + \frac{\partial v}{\partial x}u \right)\left( t, x, u\right)d\mu =  -\int_{\Omega}v\left( \frac{\partial \mu}{\partial t} + \frac{\partial \mu}{\partial x}u \right).
\end{align*}
Consequently, for every test function $v$ it holds
\begin{equation*}
    \int_{\Omega}v\left( \frac{\partial \mu}{\partial t} + \frac{\partial \mu}{\partial x}u \right) + \int_{[a, b]\times \mathscr X}v \delta_b\delta_{x_b} = \int_{[a, b]\times \mathscr X}v \delta_a\delta_{x_a},
\end{equation*}
which we can write in shorter notation as
\begin{equation}
\label{gradient_liouville_original}
    \frac{\partial \mu}{\partial t} + \frac{\partial \mu}{\partial x}u + \delta_b\delta_{x_b} = \delta_a\delta_{x_a}.
\end{equation}
Equation (\ref{gradient_liouville_original}) models the differential constraint $\dot{x}(t) = u(t)$ together with the boundary conditions and we will refer to it as the Liouville equation, see e.g. \cite{korda_convex_2014} and references therein. However, the Liouville equation does not take into account that we want to stay in the space of admissible trajectories. This means that there may be occupation measures satisfying the Liouville equation and corresponding to trajectories which do not belong to $W^{1, r}([a, b]; \mathscr X)$. To restrict the solutions of the Liouville equation accordingly, we can add the constraint $\int_{\Omega}|u|^rd\mu\leq C$ without changing the infimum in view of Assumption \ref{ass:coercivity}.

\begin{remark}
Note however that enforcing $\int_a^b |u(t)|^r dt \leq C$ explicitly as a constraint in the original problem \eqref{problem_gradient_diff_constr} may introduce a relaxation gap, as explained in \cite[Section 4.1]{korda_rios_2022}.
\end{remark}

\begin{proposition}[Superposition of trajectories]
\label{prop_decomposition1}
    Let $\mu \in \mathcal{M}_+\left( \Omega \right)$ satisfy the Liouville equation \eqref{gradient_liouville_original}  and the constraint $\int_a^b |u|^r d\mu \leq C$. Then it can be decomposed into a family of occupation measures $\mu = \int_{\mathcal{O}(\Omega)}{\gamma}d\eta(\gamma)$ for some $\eta \in \mathcal P(\mathcal{O}(\Omega))$. This family of occupation measures corresponds to a family of admissible trajectories.
\end{proposition}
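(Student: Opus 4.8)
The plan is to prove this superposition (or disintegration) result by viewing the Liouville equation as a continuity equation and invoking the classical superposition principle of Ambrosio, together with a localization argument that handles the relaxed controls. Let me sketch the main steps.

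**What I would prove and how.** The statement asserts that any nonnegative measure $\mu$ satisfying the Liouville equation can be written as a superposition of occupation measures of admissible trajectories, weighted by a probability measure $\eta$ over the space of such occupation measures. Let me work this out.

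---

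The plan is to recognize that the Liouville equation \eqref{gradient_liouville_original}, after integrating out the control variable $u$, is precisely a continuity (transport) equation for the time-marginal flow of $x$, and to apply the superposition principle for continuity equations (Ambrosio's theorem, in the form stated e.g. in Ambrosio--Gigli--Savar\'e or Ambrosio--Crippa). First I would disintegrate $\mu$ with respect to its $(t,x)$-marginal $\nu$, writing $d\mu(t,x,u) = \omega_{t,x}(du)\,d\nu(t,x)$; defining the effective velocity field $b(t,x) \defeq \int_{\R} u\,\omega_{t,x}(du)$ and testing \eqref{gradient_liouville_original} against functions $v(t,x)$, the equation becomes the weak continuity equation $\partial_t \nu + \partial_x(b\,\nu) = \delta_a\delta_{x_a} - \delta_b\delta_{x_b}$ with the prescribed boundary data acting as source/sink at the endpoints. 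The integrability needed to apply the superposition principle comes from the constraint $\int_\Omega |u|^r\,d\mu \le C$, which by Jensen's inequality controls $\int |b|^r\,d\nu$ and guarantees that $b \in L^1$ along the flow.

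Next I would build, via the superposition principle, a probability measure $\tilde\eta$ on the space of absolutely continuous curves $x(\cdot) \in W^{1,r}([a,b];\mathscr X)$ concentrated on solutions of $\dot x(t) = b(t,x(t))$ with $x(a)=x_a$, $x(b)=x_b$, such that $\nu = \int \bigl(\int_a^b \delta_{(t,x(t))}\,dt\bigr)\,d\tilde\eta(x)$. Each such curve is an admissible trajectory, and its occupation measure lifts back to $\Omega$ using the disintegrated kernel $\omega_{t,x(t)}$; pushing $\tilde\eta$ forward through the map $x(\cdot) \mapsto \gamma_{x(\cdot)} \in \mathcal{O}(\Omega)$ yields the desired $\eta \in \mathcal P(\mathcal O(\Omega))$, and the identity $\mu = \int_{\mathcal O(\Omega)} \gamma\,d\eta(\gamma)$ follows by checking it against arbitrary test functions $v(t,x,u)$, using the disintegration to handle the $u$-dependence.

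The hard part will be twofold. The first delicate point is that Ambrosio's superposition principle applies to the \emph{averaged} velocity $b$, so the reconstructed trajectories satisfy $\dot x = b(t,x)$ rather than carrying the full relaxed control; I must verify that attaching the conditional law $\omega_{t,x(t)}$ to each trajectory reconstitutes $\mu$ faithfully and that each resulting triple is indeed an occupation measure in the sense of the Definition, i.e. that the $u$-marginal along the trajectory agrees with a genuine relaxed control whose barycenter gives $\dot x$. The second point is measurability: one must ensure the map from trajectories to occupation measures is Borel so that the pushforward $\eta$ is well defined, and that the curves live in $W^{1,r}$ rather than merely $W^{1,1}$, which again is where the bound $\int_\Omega |u|^r\,d\mu \le C$ enters, guaranteeing $\tilde\eta$-almost every curve has $r$-integrable velocity. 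I would close the argument by noting that the boundary terms $\delta_a\delta_{x_a}$ and $\delta_b\delta_{x_b}$ force the superposition to be supported on curves starting at $x_a$ at time $a$ and ending at $x_b$ at time $b$, so that the family genuinely consists of admissible trajectories.
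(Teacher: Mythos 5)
Your proof is correct in outline, but it follows a genuinely different route from the paper. The paper invokes Bernard's superposition theorem for transport measures \cite[Theorem 5.2]{bernard_young_2008}, which operates directly on the phase-space measure $\mu$ on $(t,x,u)$: one only checks that $\int (\partial_t v + u\,\partial_x v)\,d\mu = 0$ for smooth test functions vanishing at $t=a,b$, and the theorem at once decomposes $\mu$ into generalized curves, i.e.\ trajectories carrying their own Young measures; the constraint $\int_\Omega |u|^r d\mu \le C$ is then used only to identify these generalized curves as occupation measures of admissible $W^{1,r}$ trajectories. You instead project $\mu$ to its $(t,x)$-marginal, apply Ambrosio's superposition principle to the resulting continuity equation with the barycentric velocity $b(t,x) = \int_{\R} u\,\omega_{t,x}(du)$, and lift back through the disintegration kernel. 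This works: the Fubini computation you sketch does reconstitute $\mu$, since $\int v\,d\mu = \int \left(\int_{\R} v(t,x,u)\,\omega_{t,x}(du)\right) d\nu(t,x)$ and $\nu$ is itself a superposition of curve occupations; and the $r$-moment bound supplies both the integrability hypothesis of the superposition principle (note that when $r=1$ you only get an $L^1$ velocity bound, so you need the $L^1$ version of the principle, e.g.\ Ambrosio--Crippa, which applies here because $\mathscr X$ is compact) and the fact that for $\eta$-almost every curve the attached family $\omega_{t,x(t)}$ has finite $r$-moment, hence is a genuine relaxed control generated by an $L^r$ sequence --- a density fact the paper's proof also relies on implicitly. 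What the paper's route buys is brevity and directness: no disintegration, no verification of narrow continuity of $t \mapsto \nu_t$, no measurability argument for the lifting map, and a decomposition in which distinct curves passing through the same point $(t,x)$ may carry distinct controls, whereas your construction forces the control to depend on $(t,x)$ only --- harmless for this proposition, but a restriction on the class of decompositions obtained. What your route buys is that it rests on the widely known Ambrosio superposition principle rather than Bernard's more specialized result, at the price of the extra projection--lifting machinery whose technical steps (measurability, endpoint pinning via the boundary marginals, curves remaining in $\mathscr X$) you correctly flag but would still need to write out.
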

\begin{proof}
    We use \cite[Theorem 5.2]{bernard_young_2008}. We say that the measure $\mu$ is a transport measure in the sense described in \cite{bernard_young_2008}. For that, we need to check that
    \begin{equation*}
        \int_{[a, b]\times \R \times \R}{\left(\frac{\partial v}{\partial t} + \frac{\partial v}{\partial x}u\right)}d\mu(t, x, u) = 0
    \end{equation*}
    for all smooth compactly supported functions $v: (a, b) \times \R \to \R$, i.e. such that $v(a,x)=v(b,x)=0$ for all $x \in \R$. \an{Although we do not assume test functions vanishing at the boundary in the derivation of \eqref{gradient_liouville_original}, here we do so to fit the framework of \cite{bernard_young_2008}.} Since $\mu$ satisfies the Liouville equation and $\textrm{spt}\mu \subseteq \Omega$, we can write
    \begin{align*}
        &\int_{[a, b]\times \R \times \R}{\left(\frac{\partial v}{\partial t} + \frac{\partial v}{\partial x}u\right)}d\mu(t, x, u)\\ =& \int_{\Omega}{\left(\frac{\partial v}{\partial t} + \frac{\partial v}{\partial x}u\right)}d\mu(t, x, u) = v(b, x_b) - v(a, x_a) = 0.
    \end{align*}
    Now we can apply \cite[Theorem 5.2]{bernard_young_2008}, which says, that there exists a Borel probability measure $\eta$ on the set of generalized curves. From the constraint $\int_a^b |u|^r d\mu \leq C$, we know that these are generated by sequences $(u_k)_{k=1}^\infty \subset L^r([a,b])$, meaning that these generalized curves are occupation measures of $\mathcal O([a, b]\times \R \times \R)$, i.e. $\mu = \int_{\mathcal{O}([a, b]\times \R \times \R])}{\gamma}d\eta(\gamma)$. Hence, $\eta$ is a decomposition of $\mu$ \an{into} occupation measures, which correspond to continuous curves $x(t): [a, b] \to \R$. Since $\textrm{spt}\mu \subset \Omega = [a, b] \times \mathscr X \times \R$, we can take $\eta$ such that $\textrm{spt}\eta \subset \mathcal{O}(\Omega)$. Hence, $\mu$ can be decomposed \an{into} a family of occupation measures, which corresponds to a family of continuous curves $x(t): [a, b] \to \mathscr X$.
\end{proof}

Now we can formulate an LP on measures
\begin{align}
\label{problem_ocm}
    p_{\text{m}}^* \defeq &\inf_{\mu \in \mathcal{M}_+\left( \Omega \right)} \int_\Omega{l\left(t, x, u\right)}d\mu,\\
    \text{s.t.: } &\frac{\partial \mu}{\partial t} + \frac{\partial \mu}{\partial x}u + \delta_b\delta_{x_b} = \delta_a\delta_{x_a}, \quad \int_{\Omega}|u|^rd\mu \leq C \notag,
\end{align}
where the last constraint is added without changing the infimum in view of Assumption~\ref{ass:coercivity}, as discussed after the derivation of the Liouville equation. \an{This constraint will play an important role later on, ensuring the convergence of the hierarchy}. Notice that we optimize over all measures from the positive cone $\mathcal{M}_+\left(\Omega \right)$ instead of solely the set of occupation measures corresponding to optimization over admissible trajectories and relaxed controls. To justify this choice, we state the following proposition, which ensures that there is no relaxation gap when we optimize over all measures instead of occupation measures.

\begin{proposition}\label{prop:pm=pb}
    It holds that $p_{\text{m}}^* = p^*_{\text{r}}$.
\end{proposition}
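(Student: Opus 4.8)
The plan is to prove the equality by the usual two inequalities, $p_{\text{m}}^* \le p_{\text{r}}^*$ and $p_{\text{m}}^* \ge p_{\text{r}}^*$, exploiting the asymmetry between the two formulations: the measure LP \eqref{problem_ocm} ranges over the whole cone of measures $\mu \in \mathcal{M}_+(\Omega)$ satisfying the Liouville equation \eqref{gradient_liouville_original} and the moment bound, whereas \eqref{problem_relaxed} ranges only over occupation measures of admissible trajectories. The inequality $p_{\text{m}}^* \le p_{\text{r}}^*$ should follow because occupation measures are particular LP competitors, while the reverse inequality is where the superposition principle of Proposition \ref{prop_decomposition1} does the real work.

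For $p_{\text{m}}^* \le p_{\text{r}}^*$, I would start from an arbitrary feasible pair $(x,\omega)$ of \eqref{problem_relaxed} and form its occupation measure $\mu = \lambda \otimes \delta_{x(t)} \otimes \omega_t$. The computation preceding \eqref{gradient_liouville_original} already shows that such a $\mu$ satisfies the Liouville equation, and by construction $\int_\Omega l\, d\mu$ equals the objective of $(x,\omega)$. The only missing ingredient to call $\mu$ feasible for \eqref{problem_ocm} is the bound $\int_\Omega |u|^r d\mu \le C$; invoking Assumption \ref{ass:coercivity}, I would restrict attention to a minimizing sequence of controls whose occupation measures satisfy this bound, so that their LP objectives converge to $p_{\text{r}}^*$. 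Passing to the infimum then yields $p_{\text{m}}^* \le p_{\text{r}}^*$.

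For the reverse inequality, I would take any $\mu$ feasible for \eqref{problem_ocm} and apply Proposition \ref{prop_decomposition1} to disintegrate it as $\mu = \int_{\mathcal{O}(\Omega)} \gamma\, d\eta(\gamma)$ with $\eta \in \mathcal{P}(\mathcal{O}(\Omega))$. Since each $\gamma$ is the occupation measure of an admissible trajectory with an associated relaxed control, it is a feasible competitor for \eqref{problem_relaxed}, so $\int_\Omega l\, d\gamma \ge p_{\text{r}}^*$. Integrating this fibrewise bound against the probability measure $\eta$ and exchanging the order of integration gives $\int_\Omega l\, d\mu = \int_{\mathcal{O}(\Omega)} \big(\int_\Omega l\, d\gamma\big)\, d\eta(\gamma) \ge p_{\text{r}}^* \int_{\mathcal{O}(\Omega)} d\eta = p_{\text{r}}^*$. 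Taking the infimum over feasible $\mu$ delivers $p_{\text{m}}^* \ge p_{\text{r}}^*$, and combining the two inequalities closes the argument.

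The main obstacle is not the algebra but the measure-theoretic justification of the disintegration step: one must check that the linear objective genuinely passes through $\mu = \int \gamma\, d\eta$ (a Fubini/Tonelli argument needing $l$ to be integrable along the fibres, which the moment bound together with compactness of $[a,b]\times\mathscr{X}$ supplies) and, more importantly, that every fibre $\gamma$ is the occupation measure of a truly admissible trajectory — meeting the boundary data, remaining in $\mathscr{X}$, and carrying a bona fide relaxed control. This is precisely the content of Proposition \ref{prop_decomposition1}, so the crux of the proof is the correct invocation of that superposition result. A secondary subtlety, handled by Assumption \ref{ass:coercivity}, is ensuring in the first inequality that the $L^r$-bound can be imposed along a minimizing sequence without raising the relaxed value $p_{\text{r}}^*$.
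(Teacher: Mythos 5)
Your proof is correct and rests on the same two ingredients as the paper's: occupation measures of relaxed pairs for $p_{\text{m}}^* \le p_{\text{r}}^*$, and the superposition principle of Proposition \ref{prop_decomposition1} for the converse. The converse is, however, structured differently, and your version is tighter in two respects. The paper supposes an \emph{optimal} measure $\mu^*$ for \eqref{problem_ocm} exists, decomposes it, proves by an averaging argument that every $\gamma\in\mathrm{spt}\,\eta$ attains the optimum, and then extracts a single fiber as a competitor for \eqref{problem_relaxed}; this presupposes attainment in \eqref{problem_ocm}, and it also asserts that every fiber is feasible for \eqref{problem_ocm}, which is delicate because an individual $\gamma$ need not satisfy $\int_\Omega|u|^r d\gamma\le C$ (only its $\eta$-average does). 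You instead take an \emph{arbitrary} feasible $\mu$, use only that each fiber is feasible for the relaxed problem \eqref{problem_relaxed} (which carries no moment constraint), integrate the fiberwise bound $\int_\Omega l\,d\gamma\ge p_{\text{r}}^*$ against $\eta$, and pass to the infimum --- no attainment assumption, no fiber feasibility for \eqref{problem_ocm} needed. On the easy direction both proofs share the same soft spot: the occupation measure of a (near-)optimal relaxed pair need not obey $\int_\Omega|u|^r d\mu\le C$. You at least flag this and appeal to Assumption \ref{ass:coercivity}, but note that this assumption furnishes a minimizing sequence for the \emph{classical} problem \eqref{problem_gradient_diff_constr}, whose values converge to $p^*$ rather than $p_{\text{r}}^*$; as written your argument yields $p_{\text{m}}^*\le p^*$, and bridging the remaining gap to $p_{\text{r}}^*$ requires exactly the justification that the paper's own proof also leaves implicit.
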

\begin{proof}
    We immediately have $p_{\text{m}}^* \leq p^*_{\text{r}}$, because whenever we have a trajectory $x^*(t)$ and a relaxed control $\omega_t^*(du)$ optimal for problem (\ref{problem_relaxed}), we can choose an occupation measure $d\mu = \lambda(dt)\delta_{x^*(t)}\omega_t^*(du)$, which is admissible for problem (\ref{problem_ocm}) and achieves the same objective value. 
    
    Conversely, let us prove that $p_{\text{m}}^* \geq p^*_{\text{r}}$. Suppose we have a measure $\mu^*$ optimal for the problem (\ref{problem_ocm}). Then $\mu^*$ satisfies the Liouville equation and hence, according to Proposition \ref{prop_decomposition1}, it can be decomposed into a family of occupation measures. More precisely, $\mu^* = \int_{\mathcal{O}(\Omega)}{\gamma}d\eta(\gamma)$ for some $\eta \in \mathcal P\left(\mathcal O\left(\Omega\right)\right)$. Clearly, every $\gamma \in \mathrm{spt}\eta$ is feasible for problem (\ref{problem_ocm}) and therefore, $\int_{\Omega}{l(t, x, u)d\gamma} \geq \int_{\Omega}{l(t, x, u)d\mu^*}$. Furthermore, observe that every $\gamma \in \mathrm{spt}\eta$ is optimal for problem (\ref{problem_ocm}). Indeed, we have
    \begin{align*}
        &\int_{\Omega}{l(t, x, u)d\mu^*} = \int_{\mathcal O(\Omega)}{\int_{\Omega}{l(t,x, u)d\gamma}d\eta(\gamma)} \\ \geq& \int_{\mathcal O(\Omega)}{\int_{\Omega}{l(t,x, u)d\mu^*}d\eta(\gamma)} = \int_{\Omega}{l(t, x, u)d\mu^*}.
    \end{align*}
    In order for the equality to hold, $\int_{\Omega}{l(t,x, u)d\gamma} = \int_{\Omega}{l(t, x, u)d\mu^*}$ for every $\gamma \in \mathrm{spt}\eta$. Thus, we can choose arbitrary $\gamma_1 = \lambda(dt) \delta_{x(t)}(dx) \omega_t(du) \in \mathrm{spt}\eta$, which is optimal for (\ref{problem_ocm}) and take $x(t)$, $\omega_t(du)$ feasible for problem (\ref{problem_relaxed}), which achieves the same objective value.
\end{proof}

\section{Compactification using homogenization}\label{sec:lpc}

In this section, we use a change of variables to deal with the fact that $\Omega$ (the support of positive measure $\mu$ from problem \eqref{problem_ocm}) is unbounded. Recall that both the time interval $[a, b]$ and the state interval $\mathscr X$ are compact. We use partial homogenization with respect to the control variable $u$, which may be unbounded. The concept of homogenization in polynomial optimization is described e.g. in \cite{huang_homogenization_2022} or \cite[Section 8.5]{nie_moment_2023}. Consider the change of coordinates:
\begin{equation}
\label{gradient_coordinate_change}
    u = \frac{z}{w},
\end{equation}
where
\begin{equation}
\label{gradient_coordinate_constraints}
    (z, w) \in \mathscr B_s \defeq \Set{(z, w) \in \R^2}{z^s + w^s = 1, w \geq 0}
\end{equation}
for some natural number $s\geq 1$. Due to the spherical constraint, both $z$ and $w$ are bounded and $\mathscr B_s$ is compact whenever $s$ is even. For simplicity, we will assume that $s$ is even.
\begin{remark}
\label{remark_s_odd}
For $s$ odd, we must proceed with caution and deal with this situation individually. Some cases with $s$ odd will be discussed in the examples section. \an{In general, one may deal with odd $s$ by adding lifting variables as described in \cite[Section 6.3]{claeys_semi-definite_2014}. Alternatively, the control domain could be split into two parts - we can introduce an occupation measure whose control support is positive, and another one whose control support is negative.}
\end{remark}

\begin{proposition}
\label{prop_bijection}
The mapping $g: \R \cup \{\pm \infty\} \to \mathscr B_s,  u \mapsto (z, w) \an{= \left( \frac{u}{\sqrt[s]{1+u^s}}, \frac{1}{\sqrt[s]{1+u^s}} \right)}$ such that $g(\pm \infty)=(\pm1, 0)$ is a bijective homeomorphism.
\end{proposition}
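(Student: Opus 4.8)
The plan is to verify the three defining properties of a homeomorphism in turn: that $g$ genuinely maps into $\mathscr B_s$, that it is a continuous bijection, and that its inverse is continuous. Since $s$ is even, the relevant quantity $1 + u^s$ is strictly positive for every finite $u$, so the real $s$-th root $\sqrt[s]{1+u^s}$ is well defined and positive, and the formula for $g$ makes sense on all of $\R$.

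First I would check that the image lands in $\mathscr B_s$. For finite $u$, writing $z = u/\sqrt[s]{1+u^s}$ and $w = 1/\sqrt[s]{1+u^s}$ gives
\begin{equation*}
    z^s + w^s = \frac{u^s}{1+u^s} + \frac{1}{1+u^s} = 1,
\end{equation*}
and $w > 0$, so $(z,w) \in \mathscr B_s$; for the points at infinity the prescribed values $(\pm 1, 0)$ satisfy $(\pm 1)^s + 0 = 1$ and $w = 0 \geq 0$, so they too lie in $\mathscr B_s$. Next I would exhibit the inverse: from $u = z/w$ one recovers $u$ whenever $w \neq 0$, and the two boundary points $(\pm 1, 0)$ of $\mathscr B_s$ (the only points with $w = 0$) are sent to $\pm \infty$. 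This shows $g$ is a bijection between $\R \cup \{\pm\infty\}$ and $\mathscr B_s$, the candidate inverse being $g^{-1}(z,w) = z/w$ for $w > 0$ and $g^{-1}(\pm 1, 0) = \pm\infty$.

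For continuity on the finite part, both coordinate functions are ratios of continuous functions with nowhere-vanishing denominator, hence continuous on $\R$; the inverse is likewise continuous on $\{w > 0\}$. The main obstacle is the behaviour at $\pm\infty$, where one must argue continuity with respect to the topology of the two-point compactification $\R \cup \{\pm\infty\}$. Here I would factor out the dominant power: dividing numerator and denominator by $u$ gives $z = 1/\sqrt[s]{1/u^s + 1}$ and $w = (1/u)/\sqrt[s]{1/u^s+1}$, so as $u \to +\infty$ we get $z \to 1$ and $w \to 0^+$, and as $u \to -\infty$ we get $z \to -1$ and $w \to 0^+$, matching $g(\pm\infty) = (\pm 1, 0)$. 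Conversely, as $(z,w) \to (\pm 1, 0)$ within $\mathscr B_s$ with $w > 0$, the ratio $z/w \to \pm\infty$, giving continuity of $g^{-1}$ at the boundary points. Since $\R \cup \{\pm\infty\}$ is compact and $\mathscr B_s$ is Hausdorff, one could alternatively invoke the fact that a continuous bijection from a compact space to a Hausdorff space is automatically a homeomorphism, which would let me skip the explicit verification of continuity of $g^{-1}$ once the continuity of $g$ and the bijectivity are established.
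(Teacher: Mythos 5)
Your proof is correct and takes essentially the same route as the paper: exhibit the explicit inverse $(z,w)\mapsto z/w$ (with $(\pm 1,0)\mapsto\pm\infty$) and verify continuity of both maps, with your explicit limit computations at $\pm\infty$ and the compact-to-Hausdorff shortcut actually being more careful than the paper's rather terse continuity claim. One minor slip worth fixing: the factored formulas $z = 1/\sqrt[s]{1/u^s+1}$ and $w = (1/u)/\sqrt[s]{1/u^s+1}$ hold only for $u>0$, since $\sqrt[s]{u^s}=|u|$; for $u<0$ one must factor out $|u|$, giving $z = -1/\sqrt[s]{1/u^s+1}$ and $w = 1/\left(|u|\sqrt[s]{1/u^s+1}\right)$, which still yields your stated (correct) limits $z\to -1$, $w\to 0^+$ as $u\to-\infty$.
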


\begin{proof}
The mapping $u \mapsto \left( \frac{u}{\sqrt[s]{1+u^s}}, \frac{1}{\sqrt[s]{1+u^s}} \right)$ clearly satisfies (\ref{gradient_coordinate_change}) - (\ref{gradient_coordinate_constraints}) and for every $z, w$, $w > 0$ we have a pre-image $u$ defined by equation (\ref{gradient_coordinate_change}). The pre-image of $\left(\pm 1, 0 \right)$ is $\pm \infty$ according to our definition. Therefore, $g$ is a surjection. Whenever $g(u_1) = g(u_2)$, that means 
\begin{equation*}
    \left( \frac{u_1}{\sqrt[s]{1+u_1^s}}, \frac{1}{\sqrt[s]{1+u_1^s}} \right) = \left( \frac{u_2}{\sqrt[s]{1+u_2^s}}, \frac{1}{\sqrt[s]{1+u_2^s}} \right),
\end{equation*}
we have $u_1 = u_2$. The equality of the second components implies that $|u_1| = |u_2|$ and from the equality of the first components, it must hold that $u_1 = u_2$. This concludes that $g$ is an injection. Since $g$ is continuous and has an inverse, $g^{-1}: (z, w)\mapsto \frac{z}{w}$, with $(\pm1, 0) \mapsto \pm \infty$, which is continuous as well, $g$ is a homeomorphism.
\end{proof}

 We can now define the compact set
 \begin{equation*}
 \Omega_{\text{h}} \defeq \Set{\left( t, x, z, w \right) \in \R^4}{t \in [a, b], x \in \mathscr X,(z, w)\in \mathscr B_s }
\end{equation*}
which contains new homogenized variables. Now we can formulate LP (\ref{problem_ocm}) in the homogenized coordinates as follows:
\begin{align}
\label{problem_gradient_lp_rational}
    p_{\text{b}}^* \defeq &\inf_{\mu \in \mathcal{M}_+\left( \Omega_{\text{h}} \right)} \int_{\Omega_{\text{h}}}{l\left(t, x, \frac{z}{w}\right)}d\mu,\\
    \text{s.t.: } &\frac{\partial \mu}{\partial t} + \frac{\partial \mu}{\partial x}\frac{z}{w} + \delta_b\delta_{x_b} = \delta_a\delta_{x_a}, \quad \int_{\Omega}\frac{|z|^r}{w^r}d\mu \leq C. \notag
    \end{align}
Since the mapping $g$ of Proposition \ref{prop_bijection} is a bijection, problems (\ref{problem_ocm}) and (\ref{problem_gradient_lp_rational}) are equivalent, achieving the same optimum ($p_{\text{b}}^* = p_{\text{m}}^*$). Note, however, that it is not achieved by the same optimal measure. When using the substitution (\ref{gradient_coordinate_change}), we introduced rational terms and the data in LP (\ref{problem_gradient_lp_rational}) are not polynomial.

\section{From rational to polynomial data}\label{sec:lpp}

\label{recover_polynomial_property}
We restore polynomial data in the same manner as it is described in \cite[Section 8.5]{nie_moment_2023} for polynomial optimization. Letting $\tilde l(t, x, z, \an{w}) \defeq w^rl(t, x, \frac{z}{w})$, we can rewrite LP (\ref{problem_gradient_lp_rational}) as
\begin{align}
    \label{problem_gradient_lp_rational_denominator}
    p_{\text{b}}^* \defeq &\inf_{\mu \in \mathcal{M}_+\left( \Omega_{\text{h}} \right)} \int_{\Omega_{\text{h}}}{\frac{\Tilde{l}\left(t, x, z, \an{w}\right)}{w^r}}d\mu,\\
    \text{s.t.: } &\frac{\partial \mu}{\partial t} + \frac{\partial \mu}{\partial x}\frac{z}{w} + \delta_b\delta_{x_b} = \delta_a\delta_{x_a}, \quad \int_{\Omega}\frac{|z|^r}{w^r}d\mu \leq C. \notag
\end{align}
From now on, we will assume that $r$ is even, which will allow us to remove the absolute value in the inequality constraint. This corresponds to the assumption that $s$ is even, and similarly, as was suggested in Remark \ref{remark_s_odd}, we will deal with cases with $r$ odd in the examples section. \an{Notice that since $r$ is the maximum of the degree of $l$ in $u$ and 1, $\tilde l$ is a polynomial.} Now consider the following LP with purely polynomial data
\begin{align}
    \label{problem_gradient_lp_rational_polynomial}
    p_{\text{p}}^* \defeq &\inf_{\nu \in \mathcal{M}_+\left( \Omega_{\text{h}} \right)} \int_{\Omega_{\text{h}}}\Tilde{l}\left(t, x, z, \an{w}\right)d\nu,\\
    \text{s.t.: } &\frac{\partial \nu}{\partial t}w^r + \frac{\partial \nu}{\partial x}zw^{r-1} + \delta_b\delta_{x_b} = \delta_a\delta_{x_a}, \quad \int_{\Omega}z^rd\nu \leq C. \notag
\end{align}

\begin{proposition}[No relaxation gap]
\label{prop_substitution_measures_equivalence}
It holds $p_{\text{p}}^* = p_{\text{r}}^*$.
\end{proposition}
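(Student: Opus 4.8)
The plan is to prove $p_{\text{p}}^* = p_{\text{b}}^*$ and then invoke the already-established chain $p_{\text{b}}^* = p_{\text{m}}^* = p_{\text{r}}^*$ (Proposition \ref{prop:pm=pb} together with the bijection of Proposition \ref{prop_bijection}, which gives $p_{\text{b}}^*=p_{\text{m}}^*$). The bridge between \eqref{problem_gradient_lp_rational_denominator} and \eqref{problem_gradient_lp_rational_polynomial} is the reweighting $d\mu = w^r\,d\nu$, i.e. $d\nu = w^{-r}\,d\mu$, which formally cancels all denominators: the objective $\int \tilde l/w^r\,d\mu$ becomes $\int\tilde l\,d\nu$, the constraint $\int |z|^r/w^r\,d\mu$ becomes $\int z^r\,d\nu$ (using that $r$ is even), and testing the Liouville equation against $v\in C^1([a,b]\times\mathscr X)$ turns $\int(\partial_t v+\partial_x v\,z/w)\,d\mu$ into $\int(\partial_t v\,w^r+\partial_x v\,z w^{r-1})\,d\nu$. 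The entire subtlety is that $w$ vanishes on the part of $\mathscr B_s$ corresponding to $u=\pm\infty$, namely the two points $(\pm1,0)$, so this reweighting is a genuine bijection only away from $\{w=0\}$, and the two directions of the equality must be treated separately.

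For the inequality $p_{\text{p}}^*\le p_{\text{b}}^*$ I would start from a feasible $\mu$ for \eqref{problem_gradient_lp_rational_denominator}. The crucial observation is that the constraint $\int |z|^r/w^r\,d\mu\le C$ already forces $\mu(\{w=0\})=0$: at $(\pm1,0)$ one has $|z|^r/w^r=+\infty$, so any mass there would violate the bound. Hence $d\nu:=w^{-r}\,d\mu$ is well defined, and it is a \emph{finite} positive measure: testing the Liouville equation with $v(t,x)=t$ gives $\mu(\Omega_{\text{h}})=b-a$, and since $|z|\to1$ as $w\to0$ on $\mathscr B_s$, the integrand $w^{-r}$ is controlled by a constant times $1+|z|^r/w^r$, whence $\nu(\Omega_{\text{h}})\le\mathrm{const}\,(b-a)+\mathrm{const}\,C<\infty$. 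One then checks directly that $\nu$ is feasible for \eqref{problem_gradient_lp_rational_polynomial} with the same objective value, yielding $p_{\text{p}}^*\le p_{\text{b}}^*$.

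For the reverse inequality $p_{\text{b}}^*\le p_{\text{p}}^*$ I would start from a feasible $\nu$ for \eqref{problem_gradient_lp_rational_polynomial} and set $d\mu:=w^r\,d\nu$. Because $r$ is even and at least $2$, the factors $w^r$ and $z w^{r-1}$ both vanish on $\{w=0\}$, so the contribution of $\nu|_{\{w=0\}}$ drops out of the Liouville equation and $\mu$ solves the Liouville equation of \eqref{problem_gradient_lp_rational_denominator}; likewise $\int|z|^r/w^r\,d\mu=\int_{\{w>0\}}z^r\,d\nu\le C$, so $\mu$ is feasible. The objective, however, transforms only up to a boundary term, $\int\tilde l/w^r\,d\mu=\int_{\{w>0\}}\tilde l\,d\nu=\int_{\Omega_{\text{h}}}\tilde l\,d\nu-\int_{\{w=0\}}\tilde l\,d\nu$, so feasibility of $\mu$ gives $p_{\text{b}}^*\le\int_{\Omega_{\text{h}}}\tilde l\,d\nu-\int_{\{w=0\}}\tilde l\,d\nu$.

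This boundary term is exactly where the main obstacle lies, and it is the heart of the whole no-gap statement: I must rule out that concentrating $\nu$ at $\{w=0\}$ (infinite control) spuriously lowers the cost. On $\{w=0\}$ one computes $\tilde l(t,x,\pm1,0)=c_r(t,x)$, the coefficient of $u^r$ in $l$ (with the convention $c_r\equiv0$ when $\deg_u l<r$, in which case the term vanishes outright). Under the coercivity hypothesis behind Assumption \ref{ass:coercivity} (Remark \ref{remark_coercivity}), for large $|u|$ one has $l(t,x,u)\ge C_2|u|^r$, hence $c_r(t,x)=\lim_{|u|\to\infty}l(t,x,u)/u^r\ge C_2>0$ uniformly on $[a,b]\times\mathscr X$; since $z^r\ge0$ for even $r$, this gives $\int_{\{w=0\}}\tilde l\,d\nu\ge0$ and therefore $p_{\text{b}}^*\le\int_{\Omega_{\text{h}}}\tilde l\,d\nu$, i.e. $p_{\text{b}}^*\le p_{\text{p}}^*$. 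Combined with the previous paragraph this closes the proof. I expect this non-negativity — equivalently, the fact that lower-boundedness of the problem forces the leading control-coefficient of $l$ to be non-negative — to be the only genuinely delicate point, the feasibility verifications (weak-form Liouville, the two norm constraints, finiteness of the total mass) being routine once the behaviour at $\{w=0\}$ is understood.
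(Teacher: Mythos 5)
Your overall route is the same as the paper's: reduce to proving $p_{\text{b}}^* = p_{\text{p}}^*$ via Proposition \ref{prop:pm=pb} and the homogenization equivalence $p_{\text{b}}^*=p_{\text{m}}^*$, then pass between \eqref{problem_gradient_lp_rational_denominator} and \eqref{problem_gradient_lp_rational_polynomial} by reweighting measures with $w^{\pm r}$. Your substitutions are in fact the correct ones ($d\nu = w^{-r}d\mu$ from rational to polynomial, $d\mu = w^{r}d\nu$ back); the paper's proof writes them with the exponents interchanged, evidently a typo, since only your version preserves the objective, the Liouville equation and the moment constraint simultaneously. You are also considerably more careful than the paper about the set $\{w=0\}$: your observation that $\int |z|^r/w^r\,d\mu\le C$ forces $\mu(\{w=0\})=0$, and your bound $w^{-r}\le \mathrm{const}\,(1+|z|^r/w^r)$ on $\mathscr B_s$ guaranteeing finiteness of $\nu$, make the direction $p_{\text{p}}^*\le p_{\text{b}}^*$ airtight, where the paper takes all of this for granted. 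Likewise you correctly isolate the only nontrivial point of the converse direction: the objective transforms only up to the boundary term $\int_{\{w=0\}}\tilde l\,d\nu$, which must be shown non-negative. The paper ignores this term entirely (its claim that the transported measure ``achieves the same objective value'' is false when $\nu$ charges $\{w=0\}$, though only an inequality is needed).

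The gap is in how you dispose of that boundary term. You prove $\tilde l(t,x,\pm1,0)=c_r(t,x)\ge C_2>0$ only under the coercivity hypothesis of Remark \ref{remark_coercivity}, but coercivity is offered there merely as an \emph{example} of a sufficient condition for Assumption \ref{ass:coercivity}; the proposition is asserted under Assumption \ref{ass:coercivity} alone, so as written you have proved the statement only for coercive integrands. The gap can be closed without coercivity. Since $l$ is a polynomial of degree $r$ in $u$ with coefficients continuous on the compact set $[a,b]\times\mathscr X$, one has $|l(t,x,u)|\le K(1+|u|^r)$ there, so under Assumption \ref{ass:coercivity} the costs along the bounded minimizing sequence are bounded and $p^*>-\infty$. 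Now suppose $c_r(t_0,x_0)<0$; by continuity $c_r\le-\epsilon$ on a neighbourhood, and an admissible trajectory that reaches $x_0$, oscillates there with speed $|u|=N$ and small amplitude during a fixed short time window around $t_0$, and then proceeds to $x_b$, has cost at most $-\epsilon N^r\delta + O(N^{r-1})\to-\infty$ as $N\to\infty$ (here evenness of $r$ is used: both directions of travel contribute $c_rN^r<0$). This contradicts $p^*>-\infty$, so Assumption \ref{ass:coercivity} already forces $c_r\ge0$ on $[a,b]\times\mathscr X$; the boundary term is then non-negative and your argument goes through exactly as you intended.
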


\begin{proof}
In view of Proposition~\ref{prop:pm=pb} and the equality $p_{\text{b}}^* = p_{\text{m}}^*$, we have $p_{\text{b}}^* = p_{\text{r}}^*$. Therefore, it suffices to prove $p_{\text{b}}^* = p_{\text{p}}^*$.

    Let measure $\nu$ be feasible for LP (\ref{problem_gradient_lp_rational_polynomial}). Then the measure $\mu = w^{-r}\nu$ is feasible in LP (\ref{problem_gradient_lp_rational_denominator}) and it achieves the same objective value. Hence, $p_{\text{b}}^* \leq p^*_{\text{p}}$. 
    
    Conversely, given a measure $\mu$ feasible
    in (\ref{problem_gradient_lp_rational_denominator}), we define a measure $\nu = w^r\mu$, which is  feasible for LP (\ref{problem_gradient_lp_rational_polynomial}) and thus, we have $p^*_{\text{p}} \leq p_{\text{m}}^*$.
\end{proof}

\begin{proposition}
\label{prop_bounded_mass_gradient}
If $r = s$ in \eqref{gradient_coordinate_constraints},
then $\mathrm{mass}(\nu) \leq C+b-a$ for any $\nu$ feasible in (\ref{problem_gradient_lp_rational_polynomial}).
\end{proposition}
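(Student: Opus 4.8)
The plan is to bound the total mass $\mathrm{mass}(\nu) = \int_{\Omega_{\text{h}}} d\nu$ by cleverly choosing a test function in the Liouville equation and then combining it with the inequality constraint $\int_{\Omega} z^r d\nu \leq C$. The key observation is that on $\mathscr B_s$ we have the spherical identity $z^s + w^s = 1$, so when $r = s$ this gives $z^r + w^r = 1$ pointwise on the support of $\nu$. Integrating this identity against $\nu$ yields
\begin{equation*}
    \int_{\Omega_{\text{h}}} z^r d\nu + \int_{\Omega_{\text{h}}} w^r d\nu = \int_{\Omega_{\text{h}}} (z^r + w^r) d\nu = \int_{\Omega_{\text{h}}} d\nu = \mathrm{mass}(\nu).
\end{equation*}
The first term is already controlled by $C$ via the inequality constraint, so the whole task reduces to bounding $\int_{\Omega_{\text{h}}} w^r d\nu$ by $b - a$.

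To control $\int w^r d\nu$, I would test the polynomial Liouville equation in \eqref{problem_gradient_lp_rational_polynomial} against the function $v(t,x) = t$. This is the natural candidate because its time derivative is identically $1$ while its $x$-derivative vanishes, so the transport part of the equation collapses onto exactly the $w^r$ term. Concretely, recall that the weak form of the polynomial Liouville equation reads, for every $v \in C^1([a,b] \times \mathscr X)$,
\begin{equation*}
    \int_{\Omega_{\text{h}}} \left( \frac{\partial v}{\partial t} w^r + \frac{\partial v}{\partial x} z w^{r-1} \right) d\nu + v(b, x_b) - v(a, x_a) = 0.
\end{equation*}
Substituting $v(t,x) = t$ gives $\partial v / \partial t = 1$ and $\partial v / \partial x = 0$, hence
\begin{equation*}
    \int_{\Omega_{\text{h}}} w^r d\nu + b - a = 0
\end{equation*}
up to the sign convention; tracking the signs carefully in the weak formulation (the boundary terms contribute $v(a,x_a) - v(b,x_b) = a - b$) yields $\int_{\Omega_{\text{h}}} w^r d\nu = b - a$.

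Combining the two displays gives $\mathrm{mass}(\nu) = \int z^r d\nu + (b-a) \leq C + b - a$, which is precisely the claim. The main obstacle I anticipate is purely bookkeeping rather than conceptual: I must be careful about the sign convention in the weak form of the Liouville equation, since the distributional derivatives $\partial \nu / \partial t$ and $\partial \nu / \partial x$ are defined via integration by parts against test functions, and getting the orientation of the boundary Dirac terms right is essential for the $b-a$ (rather than $a-b$) to come out with the correct sign. I would also need to verify that $v(t,x) = t$ is an admissible test function, which it is since it lies in $C^1([a,b] \times \mathscr X)$, and that the substitution $w^r \mapsto 1 - z^r$ is valid everywhere on $\mathrm{spt}\,\nu \subseteq \Omega_{\text{h}}$, which holds because the constraint $z^s + w^s = 1$ with $s = r$ is built into the definition of $\mathscr B_s$. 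Since $r$ is assumed even, $z^r$ and $w^r$ are nonnegative, so no absolute-value subtleties arise.
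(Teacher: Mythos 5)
Your proof is correct and takes essentially the same approach as the paper: both split $\mathrm{mass}(\nu)=\int_{\Omega_{\text{h}}} z^r\,d\nu+\int_{\Omega_{\text{h}}} w^r\,d\nu$ using the spherical identity $z^r+w^r=1$ (valid since $r=s$), bound the first term by $C$ via the inequality constraint, and show the second term equals $b-a$. The only difference is how that last equality is obtained: the paper routes it through the substitution $d\mu = w^r\,d\nu$ back to the rational LP \eqref{problem_gradient_lp_rational_denominator} and invokes $\mu(\Omega_{\text{h}})=b-a$, whereas you test the polynomial Liouville equation directly with $v(t,x)=t$ --- the same computation one step more directly, which has the minor advantage of staying entirely within \eqref{problem_gradient_lp_rational_polynomial} and avoiding any care needed on the set $\{w=0\}$; your sign slip in the intermediate display is flagged and corrected by you, and the conclusion $\int_{\Omega_{\text{h}}} w^r\,d\nu=b-a$ is right.
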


\begin{proof}
 Using the substitution in measures from the proof of Proposition \ref{prop_substitution_measures_equivalence}, see that we have that \an{$C \geq \int_{\Omega_{\text{h}}}|z|^rd\nu = \int_{\Omega_{\text{h}}}(1 - w^r)d\nu = \mathrm{mass}(\nu) - \int_{\Omega_{\text{h}}}1d\mu = \mathrm{mass}(\nu) + a - b$,}
where $\mu$ is a measure feasible for \eqref{problem_gradient_lp_rational} or \eqref{problem_gradient_lp_rational_denominator}. The last equality holds, because $\mu(\Omega_{\text{h}}) = b - a$.
\end{proof}

The mass of any feasible measure $\nu$ being bounded is a crucial property, which makes LP (\ref{problem_gradient_lp_rational_polynomial}) amenable to approximation with the moment-SOS hierarchy, with convergence guarantees, as described \an{in \cite{tacchi_2022}}. 

\begin{remark}
\label{remark_convergence}
    \an{According to \cite[Corollary 8]{tacchi_2022},} the value of the moment-SOS relaxations converge to $p_{\text{p}}^* = p_{\text{r}}^*$ as the relaxation order goes to $\infty$, under standard Archimedean assumptions on the descriptions of the semialgebraic sets involved. \an{This statement relies on} the density of polynomials on compact sets, boundedness of the mass of $\nu$ (Proposition~\ref{prop_bounded_mass_gradient}) and Putinar's Positivstellensatz~\cite{putinar_positive_1993}.
\end{remark}

\section{Examples}\label{sec:examples}

In this section, we describe two examples. The first one demonstrates the use of homogenization in the setting from which we derived the theory. The second one sketches the possible approach in the presence of non-polynomial data. In both examples, we use \textit{GloptiPoly} \cite{henrion_gloptipoly_2009} to model the linear problem on moments and \textit{Mosek} \cite{mosek} to solve the convex relaxations.

\subsection{The Lavrentiev phenomenon}
This famous example is used to illustrate and justify the regularity assumptions made to formulate existence results in calculus of variation, see e.g. \cite[Section 4.7]{Dacorogna_2008}:
\begin{align}
\label{lavr_orig}
    &\inf_{x, u} \int_0^1{\left(t - x^3(t)\right)^2u^6(t)}dt,\\
    \text{s.t.: } & x(0) = 0, \quad x(1) = 1,\notag \\ &\dot{x}(t) = u(t), \quad x(t)\in [-1, 1] \quad \text{a.e. } t\in [0, 1], \notag\\
    &u \in L^6([0, 1]) . \notag
\end{align}
Its optimal solution $x^*(t) = \sqrt[3]{t}$ yields the optimal value 0. The solution belongs to $W^{1, 1}([0, 1]; [-1, 1])$, but it does not belong to $W^{1, p}([0, 1]; [-1, 1])$ for $p \geq \frac{3}{2}$. Note in particular that the solution does not belong to $W^{1, r}([0, 1]; [-1, 1])$ for $r=6$ equal to the degree of the integrand in $u(t)$, which violates the requirement that $u \in L^r([a, b])$. If we proceeded to use the homogenization technique, we would arrive at a reformulation of an LP on measures, for which the mass of the optimal measure is not finite, and thus, this problem would not be approximately solvable with the moment-SOS hierarchy.

Consider instead the following modified problem:
\begin{align}
\label{lavr_modified}
    &\inf_{x, u} \int_0^1{\left(t - x^3\left(t\right)\right)^2 u(t)}dt,\\
    \text{s.t. } & x(0) = 0, \quad x(1) = 1,\notag\\&\dot{x}(t) = u(t) \geq 0, \quad x(t)\in [-1, 1] \quad  \text{a.e. } t\in [0, 1],\notag\\
    &u \in L^1([0, 1]).\notag
\end{align}
This problem has the same optimal solution as the original problem \eqref{lavr_orig}. However, now it holds that $x\in W^{1, r}([0, 1]; [-1, 1])$ with $r=1$. Hence, $u \in L^1([a, b])$, and the homogenization technique can be used. The LP on positive measures corresponding to the reformulation \eqref{problem_ocm} is
\begin{align*}
     &\inf_{\mu \in \mathcal{M}_+\left( \Omega \right)} \int_\Omega{\left(t - x^3\right)^2 u}d\mu,\\
    \text{s.t.: } &\frac{\partial \mu}{\partial t} + \frac{\partial \mu}{\partial x}u + \delta_1\delta_{1} = \delta_0\delta_{0}, \quad \int_{\Omega}ud\mu \leq C,
\end{align*}
where $\Omega = [0, 1]\times [-1, 1] \times \an{[0, \infty)}$ (note \an{that} it is unbounded) and since $u \in L^1([a, b])$ is positive, we can find a constant $C$ such that $\int_a^b |u(t)| dt \leq C$. The constant $C$ is in general hard to find. In this example, the integrand is not coercive (see Remark \ref{remark_coercivity}). However, since we know the optimal solution, we know that it is sufficient to take $C \geq 1$; e.g., let $C = 5$. Notice that to recover polynomial property, we got rid of the absolute value in the moment constraint. Although $r = 1$ is odd, we are able to do that because $u \geq 0$. Following the developments in Sections \ref{sec:lpc} - \ref{sec:lpp}, we are able to arrive at the final reformulation  \eqref{problem_gradient_lp_rational_polynomial} on compact set with polynomial data as follows:
\begin{align*}
    &\inf_{\nu \in \mathcal{M}_+\left( \Omega_{\text{h}} \right)} \int_{\Omega_{\text{h}}}{\left(t - x^3\right)^2 z}d\nu,\\
    \text{s.t.} & \frac{\partial \nu}{\partial t}w + \frac{\partial \nu}{\partial x}z + \delta_1\delta_1 = \delta_0\delta_0, \quad \int_{\Omega_{\text{h}}} z d\nu \leq C,
\end{align*}
where
\begin{align*}
    \Omega_{\text{h}} = \Set{(t, x, z, w) \in \R^4}{\begin{aligned}
  & t \in [0, 1], x \in [-1, 1],\\
  &  (z, w)\in \mathscr B_1, z \geq 0\end{aligned}}.
\end{align*}
According to Remark \ref{remark_convergence}, we can solve it approximately with the moment-SOS hierarchy with convergence guarantees. 

Note that this example is a follow-up to Remark \ref{remark_s_odd}, because the integer $s$ from compactification \eqref{gradient_coordinate_constraints} is odd. Note that due to the additional constraint $z\geq 0$, the set $\Omega_{\text{h}}$ is compact and furthermore, the coordinate change is a bijection despite the fact that $s$ is odd. Therefore, for this example, the usage of $s$ odd is justified.

Note also that while originally $u(t)$ was unconstrained, here  $u(t)$ must be non-negative. However, this does not invalidate the use of homogenization - we just need to express the restriction on $u(t)$ as a restriction on $(z, w)$, which translates into the additional requirement $z \geq 0$ in this situation.

Numerically, we obtain the correct value 0 at 5 significant digits when solving the lowest (i.e. degree 8) relaxation of the moment-SOS hierarchy.

\subsection{The Brachistochrone}
For this textbook example, see e.g. \cite[Example 4.16]{Dacorogna_2008} or \cite[Section 2.1.4]{Liberzon_2012}, we show how the homogenization technique can also be used in a non-polynomial setting. The brachistochrone problem has the following formulation:
\begin{align}
\label{brachistochrone_original}
    &\inf_{x, u}\int_0^1{\sqrt{\frac{1+u^2(t)}{x(t)}}}dt,\\
    \text{s.t. }&x(0) = 0, \quad x(1) = 1, \notag \\
    &\dot{x}(t)=u(t), \quad x(t)\in [0, 1] \quad \text{a.e. } t\in [0, 1], \notag \\
    &u \in L^1([0, 1]) \notag
\end{align}
and its analytical optimal value is equal to $2.5819$ to 5 significant digits. Note that the control variable $u(t)$ can be unbounded. However, the data are not polynomial. The primal LP on measures \eqref{problem_ocm} reads
\begin{align}
\label{brachistochrone_sqrt}
    &\inf_{\mu \in \mathcal M_+(\Omega)}\int_{\Omega}{\sqrt{\frac{1+u^2}{x}}}d\mu,\\
    \text{s.t. }& \frac{\partial \mu}{\partial t} + \frac{\partial \mu}{\partial x}u + \delta_1\delta_1 = \delta_0\delta_0, \notag
\end{align}
where $\Omega = [0,1]\times [0, 1] \times \R$. 
As it was said at the beginning of this section, this example sketches a slightly different approach, since the integrand is not polynomial. For this reason, we need not use the explicit bound on the moment (i.e., the integral $\int_{\Omega}|u|^1d\mu$). This will be later on replaced by a different condition. Following the developments in Section \ref{sec:lpc}, let us simultaneously homogenize the problem and reformulate to obtain polynomial data (get rid of the square root) by a slightly different coordinate change \an{than} what is described in Section \ref{sec:lpc}. Consider \an{the substitution $y = \sqrt{x}$},
which is indeed a bijection, which maps the interval $[0, 1]$ to itself. We see that the boundary values are not changed, i.e., $y(0) = 0$, $y(1) = 1$. We now compute the derivative
\begin{equation*}
    \dot{y} = \frac{1}{2}x^{-\frac{1}{2}}\dot{x} = \frac{u}{2y}.
\end{equation*}
The new variable $y$ is still bounded (as is the $x$ variable), while its derivative may be unbounded \an{(same as the derivative of $x$)}. Hence, we now homogenize and introduce homogenized variables $(z, w)$ by following the developments of Section \ref{sec:lpc}:
\begin{align*}
    z &= wu = 2wy\dot{y},\\
    1 &= z^2 + w^2,\quad w\geq 0.
\end{align*}
We have chosen to homogenize to the compact set $\mathscr B_2$ as in \eqref{gradient_coordinate_constraints} to deal with the square root in the objective functional. Now, if we express the problem (\ref{brachistochrone_sqrt}) in variables $(t, y, z, w)$, we get
\begin{align}
\label{brachistochrone_rational}
    &\inf_{\mu \in \mathcal M_+(\Omega_{\text{h}})}\int_{\Omega_{\text{h}}}{\frac{1}{wy}}d\mu,\\
    \text{s.t. }& \frac{\partial \mu}{\partial t} + \frac{\partial \mu}{\partial x}\frac{z}{2wy} + \delta_1\delta_1 = \delta_0\delta_0, \notag
\end{align}
where
\begin{equation*}
    \Omega_{\text{h}} = \Set{(t, y, z, w) \in \R^4}{\begin{aligned}&t \in [0, 1], y \in [0, 1],\\ &(z, w) \in \mathscr B_2\end{aligned}},
\end{equation*}
which corresponds to LP \eqref{problem_gradient_lp_rational} in the described method.

What remains to be done now is to reformulate the problem (\ref{brachistochrone_rational}) with polynomial data. We do it \an{similarly} to section \ref{recover_polynomial_property}. That is, we use the substitution in measures $d\nu = \frac{1}{wy}d\mu$, as it is described in the proof of Proposition \ref{prop_substitution_measures_equivalence}. Note that $wy$ is a measurable function, which justifies the substitution in measures. The difference in the substitution (compared to what was used in Section \ref{sec:lpp}) is caused by the original data not being polynomial. We obtain the following LP
\begin{align*}
    &\inf_{\nu \in \mathcal M_+(\Omega_{\text{h}})}\int_{\Omega_{\text{h}}}d\nu,\\
    \text{s.t.: }& \frac{\partial \nu}{\partial t}wy + \frac{\partial \nu}{\partial x}\frac{z}{2} + \delta_1\delta_1 = \delta_0\delta_0, \quad \int_{\Omega_{\text{h}}}d\nu \leq C,
\end{align*}
 where the additional inequality is used to preserve finite mass of measure $\nu$. Since the mass of $\nu$ is the objective functional, we can bound it by the value of the objective functional of \eqref{brachistochrone_original} for any admissible trajectory $x(t)$. We can choose $x(t) = t$, for which $C=\int_0^1 \sqrt{\frac{2}{t}}dt = 2\sqrt{2}$. Thus, according to Remark \ref{remark_convergence}, we can solve this problem with the moment-SOS hierarchy with sound convergence guarantees. Indeed, numerically, we observe that it achieves the lower bounds $2.0000$, $2.5578$ and $2.5819$ (the global minimum) at 5 significant digits 
 for relaxation degrees $2$, $4$ and $6$.

\section{Conclusion}\label{sec:conclusion}

In this contribution, we described how the moment-SOS hierarchy can be used to solve optimal control problems with polynomial data and unbounded controls. We used a series of linear reformulations and finally reached a reformulation on compact sets, which can be solved by the hierarchy with convergence guarantees. Moreover, we proved that all these reformulations are equivalent to the first reformulation using relaxed controls, i.e. there is no relaxation gap, their optimal values are the same.


For simplicity of the exposition, we considered the particular case of scalar (i.e. single input) calculus of variations problems (i.e. the control is the velocity). Our approach is however, not limited to scalar problems, and it can be extended to multivariate problems with controlled vector fields, as well. We are also investigating the applications of this homogenization technique to unbounded states, as well as to optimal control problems of partial differential equations.

\section{Acknowledgements}
This work benefited from discussions with Rodolfo R\'ios-Zertuche, pointing out the work of Patrick Bernard. 
 

\end{document}